\documentclass[reqno]{amsart}

\usepackage{amssymb}
\usepackage{amscd}
\usepackage[all]{xy}
%\CompileMatrices

\theoremstyle{plain}%default
\newtheorem{thm}{Theorem}[section]
\newtheorem{prop}[thm]{Proposition}

\newtheorem{cor}[thm]{Corollary}
\newtheorem*{thm*}{Theorem}
\theoremstyle{definition}
\newtheorem{defn}[thm]{Definition}

\theoremstyle{remark}

\newtheorem{remark}[thm]{Remark}

\newcommand{\F}{\mathbb F}
\newcommand{\Z}{\mathbb Z}

\newcommand{\ms}{\medskip\noindent}

\newcommand\vv{\,\vert\,}
\newcommand\sub{\subseteq}
\newcommand\rar[1]{\stackrel{#1}{\to}}

\newcommand\Hom[3]{\operatorname{Hom}_{#1}(#2,#3)}
\newcommand\ext[4]{\operatorname{Ext}_{#1}^{#2}({#3},{#4})}
\newcommand\dpth{\text{depth}\,}
\newcommand{\dm}{\text{dim}\,}
\newcommand{\fg}{finitely generated }
\newcommand{\an}{\text{ann}\,}
\newcommand{\pd}{\text{pd}\,}
\newcommand{\as}{\text{Ass}\,}
\newcommand{\cod}{\text{codim}\,}

\title{On Carlson's depth conjecture } 
\author{James A. Schafer}

\date{\today}

\address{\begin{flushleft}\quad Department of Mathematics \\
\quad University of Maryland \\
\quad College Park, Maryland 20742
\end{flushleft}}

\email{jas@math.umd.edu}

\begin{document} 

\maketitle

\section{Introduction}

Let $A=\oplus_{i\geq 0}A_i$ be a \fg graded commutative $k$-algebra where $k$ is a field and $M=\oplus_{i\geq 0}M_i$ a \fg graded $k$-module.

\begin{defn} $\omega_A(M)=\min\{\dm P\vv P\in\as_AM\}$.  For $M=A$ denote this simply by $\omega\,A$. \end{defn}

It is well known (shown originally by Serre) that $\dpth_AM\leq \omega_AM$.  For a proof of this as well as other standard results of commutative algebra adapted to the graded commutative $k$-algebra setting a good reference is \cite{P}. Clearly $\omega_AM\leq \dim_AM$ and so $\dpth_AM\leq \omega_AM\leq \dim_AM$ for all \fg $M$.

It is quit easy to produce examples where $\dpth_AM<\omega_AM$.  However based on the work of Benson and Carlson in \cite{B}, Carlson in \cite{C} asked whether it might not be the case that $\omega\,A=\dpth\,A$ if $A=H^*(G,k)$ where $k$ is a field whose characteristic divides the order of the finite group $G$.  This is obviously true if $H^*(G,k)$ is Cohen-Macaulay since $\dpth A=\dim A$ in that case. Carlson showed in the same paper the conjecture is true if $\dm H^*(G,k)=2$. Except for innumerable calculations showing the conjecture was true the only other general result is one of D.J Green in \cite{G}.  A basic result of Duflot \cite{D} states that if $k$ is a field of characteristic $p>0$ then the $p$-rank of the center of $G$ is a lower bound for the depth of $H^*(G,k)$.  Green shows that if $G$ is a $p$-group and the depth of $H^*(G,k)$ equals Duflot lower bound then $\dpth\,H^*(G,k)=\omega\,H^*(G,k)$.

Note that because of the result of Duflot if $\dim H^*(G,k)=2$ the gap between the dimension and the depth of $H^*(G,k)$ is at most one.  Carlson's result could be interpreted to say that if the gap is less than or equal to one (and the dimension is two) then the conjecture is true.  It is the aim of this paper to demonstrate the following

\begin{thm*} Let $G$ be a finite group and $k$ a field whose characteristic divides the order of $G$. If $\dim H^*(G,k)-\dpth H^*(G,k)=1$, then $\dpth\,H^*(G,k)=\omega\,H^*(G,k)$.
\end{thm*}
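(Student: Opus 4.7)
My plan is to argue by contradiction. Assume $\omega\,A>\dpth A$; combined with $\dpth A\le\omega\,A\le\dm A$ and the hypothesis $\dm A-\dpth A=1$, this forces $\omega\,A=\dm A=d$ and $\dpth A=d-1$. Hence $A=H^*(G,k)$ must be unmixed of pure dimension $d$: every associated prime $\mf p$ is minimal and satisfies $\dm A/\mf p=d$. Via Quillen stratification, the minimal primes of $A$ are $\mf p_E=\sqrt{\ker(\operatorname{res}^G_E)}$ with $\dm A/\mf p_E=r_p(E)$, for $E$ a maximal elementary abelian $p$-subgroup of $G$. Purity therefore forces every maximal elementary abelian $p$-subgroup to have rank $d=r_p(G)$, and it forbids any embedded associated prime.

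Next I set up the homological picture. Let $S$ be a Sylow $p$-subgroup of $G$ and put $z=r_p(Z(S))$; by Duflot's theorem, $z\le\dpth A=d-1$. Choose a homogeneous system of parameters $\zeta_1,\dots,\zeta_d$ for $A$ with $\zeta_1,\dots,\zeta_z$ a Duflot regular sequence arising from polynomial generators of the image of $\operatorname{res}^G_{Z(S)}$, and set $P=k[\zeta_1,\dots,\zeta_d]\sub A$. Since $A$ has $P$-depth $d-1$ while $P$ has depth $d$, the Auslander--Buchsbaum formula gives $\operatorname{pd}_P A=1$; purity forces $A$ to be torsion-free as a $P$-module, so one obtains a short exact sequence
\[0\to F_1\to F_0\to A\to 0\]
of graded $P$-modules with $F_0, F_1$ finitely generated and free. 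Local duality over $P$ identifies the only two nonvanishing local cohomology groups of $A$, namely $H^{d-1}_{\mf m}(A)$ and $H^d_{\mf m}(A)$, with the $k$-duals (up to degree shifts) of $\ext{P}{1}{A}{P}$ and $\Hom{P}{A}{P}$ respectively.

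The main obstacle is to turn this two-row local cohomology configuration into a group-theoretic contradiction with purity. My plan is to invoke Benson--Greenlees local duality, which for $A=H^*(G,k)$ identifies $H^*_{\mf m}(A)$ with a shifted graded dual of a suitable Tate cohomology of $G$, together with Carlson's characterization of associated primes of $A$ (as $\mf p_E$ precisely when $E$ is a Sylow $p$-subgroup of its own centralizer $C_G(E)$). The rigidity of having only two nonvanishing rows of local cohomology, combined with the Duflot regular sequence already constructed and Green's theorem as a base case, should pin down a cohomology class $\eta\in A$ whose annihilator is a prime $\mf p_F$ for some elementary abelian $p$-subgroup $F$ with $r_p(F)\le z<d$; such an embedded associated prime would contradict $\omega\,A=d$. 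Explicitly constructing $\eta$ and computing its annihilator is the delicate point, and is where the specific structure of $H^*(G,k)$—rather than generic commutative-algebra reasoning—is indispensable.
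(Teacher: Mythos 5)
There is a genuine gap, and it sits exactly at the heart of your argument. Your reduction to the contradiction hypothesis ($\omega\,A=\dm A=d$, $\dpth A=d-1$, hence $A$ unmixed, torsion-free over a Noether normalization $P$, with $\pd_PA=1$ and only two nonvanishing local cohomology modules) is fine as far as it goes, but it only restates the situation; the contradiction itself is never derived. You say that Benson--Greenlees duality plus a characterization of associated primes ``should pin down a cohomology class $\eta$'' whose annihilator is a low-dimensional prime, and you concede that constructing $\eta$ is ``the delicate point.'' That delicate point \emph{is} the theorem: producing an associated prime of dimension $\dpth A$ is precisely what must be proved, so deferring it leaves the proof empty. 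Worse, the auxiliary facts you lean on are shaky: there is no known characterization of the associated primes of $H^*(G,k)$ as the $\frak p_E$ with $E$ Sylow in $C_G(E)$ (which $\frak p_E$ occur as embedded primes is in general open), and Green's theorem applies only when $\dpth A$ equals the Duflot bound $z$, whereas in your setup $\dpth A=d-1$ may well exceed $z$, so it cannot serve as a ``base case'' for an induction you have not set up.

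The paper's route is direct rather than by contradiction, and it is where the missing idea lives. One first proves the criterion: $\dpth A=\omega\,A$ if and only if there is a nonzero graded $A$-map $H_*(G,k)\to H^{\dpth A}_{\frak m}(A)$ (via Matlis/local duality over a polynomial ring mapping onto $A$). One then feeds this into the Greenlees spectral sequence $E_2^{p,q}=H^{p,q}_{\frak m}(H^*(G,k))\Rightarrow H_{q-p}(G,k)$: Grothendieck vanishing confines $E_2$ to columns $\dpth A\leq p\leq\dm A$ with both extreme columns nonzero, and the edge homomorphism $H_*(G,k)\to E_2^{\dpth,*}$ has image $E_\infty^{\dpth,*}$. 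When $\dm A-\dpth A=1$ there are only two columns, so $E_2=E_\infty$ and the edge map is the required nonzero map. Your ``two-row local cohomology configuration'' is the shadow of this collapse, but without the spectral sequence and the edge homomorphism you have no mechanism to convert it into the existence of the needed associated prime.
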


\section{Reduction to the case $A=k[x_1,\dots,x_d]$}

\begin{prop}\label{red} 1) Let $f:S\to A$ be a homomorphism of Noetherian rings and suppose $A$ is \fg as an $S$-module. Let $M$ be a \fg $A$-module.  Then $\as_SM=f^{-1}(\as_AM)$. If $P\in\as_AM$ then $\dm P=\dm f^{-1}P$ and hence $\omega_AM=\omega_SM$.
\par 2) Let $f:S\to A$ be an epimorphism of Noetherian rings and $M$ a \fg $A$-module.  Suppose either $S$ and $A$ are local and $f(\frak m_S)\sub \frak m_A$ or $S$ and $A$ are graded commutative \fg $k$ algebras where $k$ is a field, $f$ is a graded homomorphism and $M$ a \fg graded module. Then $\dpth_SM=\dpth_AM$.
\par 3) In the graded commutative case $f$ can also be an inclusion with $A$ integral over $S$.\end{prop}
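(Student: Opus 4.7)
My plan is to handle the three parts separately, each via standard commutative-algebra machinery adapted to the graded setting as in \cite{P}.

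For (1), the containment $f^{-1}(\as_A M)\sub\as_S M$ is immediate: if $P=\an_A(m)$ for $m\in M$, then the $S$-annihilator of $m$ is $f^{-1}(P)$, which is prime since $P$ is. For the reverse, since $M$ is \fg over $A$ and $A$ is \fg over $S$, $M$ is Noetherian as an $S$-module, and I would invoke the standard correspondence of associated primes under a module-finite extension to identify every $Q\in\as_S M$ as the contraction of some $P\in\as_A M$. The dimension equality $\dm P=\dm f^{-1}(P)$ then follows from going-up and incomparability applied to the integral extension $f(S)/(f(S)\cap P)\hookrightarrow A/P$ together with the identification $S/f^{-1}(P)\cong f(S)/(f(S)\cap P)$. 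Taking minima over $\as_A M$ immediately gives $\omega_A M=\omega_S M$.

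For (2), in both the local and graded cases the hypotheses force $f(\mf m_S)=\mf m_A$: in the local case $f^{-1}(\mf m_A)$ is a maximal ideal of $S$ containing $\mf m_S$, hence equal to $\mf m_S$, and surjectivity of $f$ then yields the equality; in the graded case $f$ is graded and surjective, so it maps $S_{>0}$ onto $A_{>0}$. With this, a regular sequence on $M$ in $\mf m_S$ maps to a regular sequence on $M$ in $\mf m_A$ acting identically (module-regularity depending only on the operators), and any regular sequence in $\mf m_A$ lifts, so $\dpth_S M=\dpth_A M$. For (3), $A$ integral and \fg over $S$ is module-finite, and every homogeneous element of $\mf m_A$ satisfies an integral equation whose lower-degree coefficients lie in $\mf m_S$, giving $\sqrt{\mf m_S A}=\mf m_A$. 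Since local cohomology depends only on the radical of the defining ideal, the functors $H^i_{\mf m_S}$ and $H^i_{\mf m_A}$ coincide on $M$ viewed through either structure, and depth being characterized as the least index with nonvanishing local cohomology yields $\dpth_S M=\dpth_A M$.

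The main obstacle is the associated-prime correspondence in (1): the reverse containment $\as_S M\sub f^{-1}(\as_A M)$ and the dimension step depend crucially on the module-finiteness of $A$ over $f(S)$, and require a careful lifting-of-primes argument from the going-up theorem. Parts (2) and (3) are largely formal once the right identifications of maximal ideals or their radicals are in place.
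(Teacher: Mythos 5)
Your treatment of parts (1) and (2) follows essentially the paper's route. For (2) it is the paper's argument verbatim: lift a maximal $M$-sequence along the surjection using $y_im=f(y_i)m$ and push one back. For (1), the forward containment and the dimension step via the module-finite extension $S/f^{-1}P\hookrightarrow A/P$ match the paper; the only divergence is that where you defer to ``the standard correspondence of associated primes under a module-finite extension,'' the paper actually proves the reverse containment by regrouping a minimal $A$-primary decomposition of $(0)$ according to the fibers of $f^{-1}$ on associated primes. The result you invoke is indeed standard (and in fact holds for an arbitrary homomorphism of Noetherian rings with $M$ finitely generated over $A$), but your diagnosis of where the difficulty lies is off: going-up and incomparability are the right tools only for the equality $\text{dim}\,P=\text{dim}\,f^{-1}P$, not for $\text{Ass}\,_SM\subseteq f^{-1}(\text{Ass}\,_AM)$, which instead rests on the fact that the minimal primes of $\text{ann}\,_A(Am)$ are associated primes of $Am$ together with the observation that a prime equal to a finite intersection of contractions must equal one of them.

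Part (3) is where you take a genuinely different route, and where there is a real gap. Your argument --- $\sqrt{\frak m_SA}=\frak m_A$ for a graded integral extension, independence of local cohomology from the base and from the radical, and the characterization of depth as the least nonvanishing local cohomology --- is clean and correct \emph{over commutative rings}. But part (3) concerns the graded-commutative case, where $A$ (e.g.\ $H^*(G,k)$ in odd characteristic) has anticommuting odd-degree elements and is not commutative, so $H^i_{\frak m_A}(M)$ and ``$\text{depth}\,_AM$'' are not yet defined in the form you use them. The entire content of the paper's proof of (3) is the reduction that legitimizes this: any $M$-sequence extends to a system of parameters, hence consists of algebraically independent (so even-degree) elements, giving $\text{depth}\,_{S^{ev}}M=\text{depth}\,_SM$ and likewise for $A$, after which Evens's strictly commutative result applies. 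Your proof needs this reduction (or an equivalent one) inserted before the local-cohomology machinery can be brought to bear.
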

\begin{proof} 1) If $Q$ is an $A$-primary in $M$ with associated prime $P=r(\an_AM/Q)$ then it is easily seen that $Q$ is $S$-primary with associated prime $f^{-1}P=r(\an_SM/Q)$.  Hence if
$0=\bigcap Q$ is a minimal primary decomposition of $0$ as an $A$-module we obtain a minimal decomposition of $0$ as an $S$-module by intesecting all those $Q$ whose associated primes have the same inverse image under $f$.  If $P_i$ is the prime associated to the $A$-primaray module $Q_i$ then this gives an $S$-primary decompostion $(0)=\cap\bar Q_{r_j}$ where $\bar Q_{r_j}=\cap\{Q_i\vv f^{-1}P_{r_i}=f^{-1}P_{r_j}\}$. This is a primary decomposition of $(0)$ with distinct primes and it must be minimal for if $\cap_{j'\ne j}\bar Q_{r_{j'}}\sub \bar Q_{r_j}$ then $\cap_{s\ne r_j}Q_s\sub Q_{r_j}$ which is impossible since $(0)=\cap Q_i$ is a minimal primary decomposition.  The last statement follows since $S/f^{-1}P\sub A/P$ and the latter is \fg over the former.
\par 2) If $f$ is an epimorphism and $\{x_1,\dots,x_k\}$ is a maximal $M$-sequence in $A^+$, we may choose $y_i\in S^+$ (of the same degree) with $f(y_i)=x_i$. Since $y_im=x_im$ for all $m\in M$, $\{y_1,\dots,y_k\}$ is an $M$-sequence in $S^+$.  Hence $\dpth_SM\geq\dpth_AM$. If $\{y_1,\dots y_t\}$ is an $M$-sequence in $S^+$ then since $y_im=f(y_i)m$, $\{f(y_1),\dots,f(y_t)\}$ is an $M$-sequence in $A^+$ and so $\dpth_AM\geq\dpth_SM$.
\par 3) In \cite{Ev} Evens shows this is true in the strictly commutative case if $A$ is integral extension of $S$.  An $M$-sequence for $M$ consists of algebraically independent elements since any $M$-sequence can be exended to a system of parameters for $M$ and these are algebraically independent, \cite{B1}, theorem 2.2.7. Hence any $M$-sequence is contained in $S^{ev}$ or $A^{ev}$.  Hence $\dpth_{S^{ev}}M=\dpth_SM$ and therefore $\dpth_AM=\dpth_SM$.\end{proof}

%\begin{thm} Let $f:S\to A$ be a graded homomorphism of graded $k$-algebras such that $A$ is \fg as an $S$-module.  Let $M$ be a \fg graded $A$-module, then
%$$ \omega_SM=\omega_AM \text{ and } \dpth_SM=\dpth_AM.$$\end{thm}

%\begin{defn} Let $M^\vee$ be the graded $A$-module with $(M^\vee)_j=\Hom k{M_{-j}}k$ and $A$-action given by $a\in A_t$ and $f\in (M^\vee)_{-s}$ then $af\in (M^\vee)_{t-s}$ is given as follows: %If $t-s\leq 0$ then $af(x)=f(ax)$ for $x\in M_{-t+s}$ and if $t-s>0$ then $af=0$  %$$  af=\big\{\begin{aligned} x\mapsto f(ax) \text{ for } x\in M_{s-t} &\text{ if }s-t\geq 0,\\
%0 \qquad\qquad\qquad&\text{ if } s-t<0.\end{aligned}$$\end{defn}

\section{Modules over regular local or $*$-local domains}

In this section, $S$ will always be a Noetherian, regular ring. By $(S,\frak m)$ we will mean a regular \fg graded commutative $k$-algebra over a field $k$ with $S_0=k$ and where $\frak m$ is the maximal homogeneous ideal of elements of positive degree or a regular local ring and $\frak m$ is the maximal ideal. We will just say $(S,\frak m)$ is a local ring in both cases.  If $M$ is a \fg $S$-module, graded in the graded case, $\pd M$ denotes the projective dimension of $M$ over $S$ and $H^i_{J}(M)$ is the local cohomology of the \fg module $M$ with respect to the ideal $J\sub S$.

\begin{thm} i) For \fg $M$, $\min\{j\vv \ext SjMS)\neq 0\}=\cod M$.
\par ii) If $(S,\frak m)$ is local then $\max\{j\vv \ext SjMS\neq 0\}= \pd M$.
\end{thm}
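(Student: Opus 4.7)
The plan is to prove (ii) first by a direct minimal-resolution argument and then deduce (i) by localizing at a height-minimal prime over $\an_S M$; so the proof reverses the order of the statement.

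For (ii), regularity of $S$ gives $\pd M<\infty$ by Auslander--Buchsbaum--Serre. Choose a minimal free resolution $F_\bullet\to M$ of length $p=\pd M$. Since $\Hom{S}{F_\bullet}{S}$ has no terms in cohomological degree $>p$, we have $\ext{S}{j}{M}{S}=0$ for $j>p$. Minimality forces every differential $F_{i+1}\to F_i$ to be represented by a matrix with entries in $\frak m$; dualizing preserves this, so the image of $\partial^\ast\colon \Hom{S}{F_{p-1}}{S}\to \Hom{S}{F_p}{S}$ lies in $\frak m\,\Hom{S}{F_p}{S}$. Hence $\ext{S}{p}{M}{S}$ surjects onto $\Hom{S}{F_p}{S}\otimes_S S/\frak m$, which is nonzero because $F_p\neq 0$. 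In the $*$-local case graded Nakayama and graded minimal resolutions make the same argument go through verbatim.

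For the upper bound in (i), pick a prime $P$ minimal over $\an_S M$ with $\operatorname{ht}(P)=\cod M$, which exists by definition of $\cod M$. Then $S_P$ is regular local of dimension $\cod M$ and $M_P$ is a nonzero $S_P$-module whose only associated prime is $PS_P$, so $\operatorname{depth}_{S_P} M_P=0$. Auslander--Buchsbaum then forces $\operatorname{pd}_{S_P} M_P=\cod M$, and part (ii) applied to $S_P$ yields $\ext{S_P}{\cod M}{M_P}{S_P}\neq 0$. Because Ext commutes with localization, $\ext{S}{\cod M}{M}{S}_P\neq 0$, hence $\ext{S}{\cod M}{M}{S}\neq 0$. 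For the lower bound, pick a prime filtration $0=M_0\subset M_1\subset\cdots\subset M_n=M$ with $M_i/M_{i-1}\cong S/P_i$ and each $P_i\in\operatorname{Supp}(M)$, so $\operatorname{ht}(P_i)\geq\cod M$ for every $i$. Induction along the long exact sequences of Ext reduces the claim to showing $\ext{S}{j}{S/P_i}{S}=0$ for $j<\cod M$. Since $S$ is regular and hence Cohen--Macaulay, $\operatorname{grade}(P_i,S)=\operatorname{ht}(P_i)\geq\cod M$, and the Rees theorem identifies this grade with $\min\{j : \ext{S}{j}{S/P_i}{S}\neq 0\}$, which is exactly what we need.

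The entire argument is classical in the local case; the only real care needed is checking that the graded ($*$-local) analogues of Nakayama, minimal free resolutions, the Auslander--Buchsbaum formula, and prime filtrations by homogeneous submodules all hold, which is standard in the graded-local setting. No single step is a serious obstacle; the main point is simply to verify each ingredient in the $*$-local case.
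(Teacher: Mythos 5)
Your proposal is correct, but it takes a genuinely different route from the paper on both halves, and it also reverses the logical order: the paper proves (i) and (ii) independently, whereas you derive the upper bound in (i) from (ii). For part (ii) the paper never touches minimal free resolutions; it invokes local duality $H^i_{\frak m}M\cong \ext S{d-i}MS^\vee$ for the Gorenstein local ring $S$, the identity $\dpth M=\min\{j\vv H^j_{\frak m}M\neq 0\}$, Auslander--Buchsbaum, and faithfulness of Matlis duality to pin down $\max\{j\vv\ext SjMS\neq 0\}$. Your minimal-resolution-plus-Nakayama argument (the dual of a minimal differential has image in $\frak m\Hom S{F_p}S$, so $\ext SpMS$ surjects onto $\Hom S{F_p}S\otimes_S S/\frak m\neq 0$) is the more elementary and self-contained of the two, avoiding Gorenstein duality entirely, at the cost of verifying the graded analogues of Nakayama and minimality, which, as you say, is routine. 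For part (i) the paper gets both inequalities in one stroke from the chain $\cod M=\cod(\an M)=\dpth(\an M,S)$ (Cohen--Macaulayness of the regular ring $S$) together with the characterization $\dpth(\an M,S)=\min\{j\vv\ext SjMS\neq 0\}$ of \cite{E}, Proposition 18.4, which is stated as an equivalence. Your lower bound --- prime filtration with quotients $S/P_i$, $\operatorname{ht}P_i\geq\cod M$, plus the Rees theorem for each $S/P_i$ --- essentially reproves the relevant half of that citation, while your upper bound (localize at a prime minimal over $\an M$ of height $\cod M$, note that $M_P$ then has the maximal ideal as its only associated prime so $\dpth_{S_P}M_P=0$, and apply Auslander--Buchsbaum and part (ii)) is a genuinely different path that the paper does not need. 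Both arguments are sound; yours is longer but exposes the mechanism behind the citation, while the paper's is a two-line appeal to standard grade and duality theory.
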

\begin{proof} i) $\cod M=\cod(\an M)$, $\cod I=\dpth I=\dpth(I,S)$ for all ideals $I\sub S$ since $S$ is Cohen-Macauley. But $\dpth(\an M,S)\geq n$ if and only if $\ext SjMS=0$ for all $j<n$ by \cite{E}, Proposition 18.4.
\par The proof of ii) uses the following duality theorem for Gorenstein local rings. Proofs for the local case may be found in \cite{H}, 11.8 and for the graded $k$-algebra case in \cite{BS} 11.2.5 and \cite{BH}, 3.58.
$N^\vee$ is the Matlis dual $\Hom SN{E}$ where $E=E(S/\frak m)$ is the injective envelope of $S/\frak m=k$.  In the graded case $N^\vee$ may be identified, \cite{BH}, 3.6.16, with graded Hom functor $$*\Hom kNk\text{  where } *\Hom kNk_s=\Hom k{N_{-s}}k.$$

\begin{remark} In the graded case, define graded $\operatorname{Hom}$, $*\operatorname{Hom}_S(M,N)_t=\{f:M\to N\vv f(M_u)\sub N_{u+t} \text{ for all }u\}$. If $M$ is \fg then $*\operatorname{Hom}_S(M,N)=\operatorname{Hom}_S(M,N)$ and therefore $*\ext S*MN=\ext S*MN$ for all $N$.\end{remark}

\begin{thm} \label{Hoch} Let $(S,\frak m)$ be a Gorenstein, local ring of dimension $d$ and $M$ a \fg $S$-module. Then there exists a natural isomorphism $$H^i_{\frak m}M\cong \ext S{d-i}MS^\vee.$$\end{thm}
\begin{remark} In the graded case \cite{BH} we have an isomorphism of graded modules $(H^i_{\frak m}M)^\vee\simeq \ext S{d-i}M{S[a]}$ for some $a$. Since $H^j_{\frak m}M$ is Artinian and duality is an anti equivalence of the full subcategories of the category of graded $(S,\frak m)$-modules consisting of the \fg modules and the Artinian modules we obtain  the dedsired isomorphism. If $S=k[x_1,\dots,x_n]$ then $a=-\sum\text{degree}(x_i)$.\cite{BH} p.140.\end{remark} 

\ms  Proof of ii) By Auslander-Buchsbaum $\text{pd}\,M+\dpth M=d$.  But 
$$\dpth\,M=\min\{j\vv H^j_{\frak m}M\neq 0\}=d-\max\{e\vv\ext SeMS^\vee\neq 0\}$$ and so
$$ \text{pd}M=\max\{e\vv\ext SeMS^\vee\neq 0\}.$$
But the Matlis duality functor is faithful (obvious in the graded case) and hence the conclusion.
%Clearly $\max\{e\vv\ext SeMS\neq 0\}\geq\max\{e\vv\ext SeMS^\vee\neq 0\}$ and $\max\{e\vv\ext SeMS\neq 0\}\leq\pd M$.  Hence
%$$\pd M\geq\max\{e\vv\ext SeMS)\neq 0\}\geq\max\{e\vv\ext SeMS^\vee\neq 0\}=\pd M.$$
\end{proof}

The following result by Eisenbud, Huenke and Vasconselos will be essential.
 
\begin{thm}[\cite{EV}, Theorem 1.1] Let $M$ be a \fg module over a regular domain $S$ and set $I_e=\an\ext SeMS$:
\begin{enumerate}\item $\cod I_e \geq e$ and $M/(0:_MI_e)$ has no associated primes of $\cod\,e$.  In particular, if $P\in\text{Spec}\,S$ and $\cod P=e$ then $P\in\as (M)$ iff $P\in V(I_e)$.
\item If $c=\cod M$ then $\text{hull}\,(0,M)$ is the kernel of the natural map
$$\varphi:M\to\ext Sc{\ext ScMS}S). $$\end{enumerate}\end{thm}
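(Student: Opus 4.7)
The plan is to derive both parts from localization at primes of the appropriate codimension, combined with the local duality of Theorem~\ref{Hoch}.

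For part~(1), the codimension bound is immediate: if $\cod P<e$ then $S_P$ is regular local of dimension less than $e$, so by Auslander--Buchsbaum $\pd_{S_P}M_P<e$; hence $(\ext SeMS)_P=0$ and $I_e\not\sub P$, giving $\cod I_e\ge e$. The same equivalences at a prime $P$ with $\cod P=e$, together with $\dpth_{S_P}M_P=0\iff PS_P\in\as_{S_P}M_P$ and Auslander--Buchsbaum, yield the ``in particular'' statement. For the remaining assertion, fix $P$ with $\cod P=e$. If $I_e\not\sub P$ the localization at $P$ of $M/(0:_MI_e)$ is zero. If $I_e\sub P$, the inequality $\cod I_e\ge e$ together with $\dim S_P=e$ forces every prime of $S_P$ containing $(I_e)_P$ to be $PS_P$, so a power of $PS_P$ is contained in $(I_e)_P$ and hence $(0:_{M_P}(I_e)_P)\sub H^0_{PS_P}(M_P)$. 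Conversely, Theorem~\ref{Hoch} applied to $(S_P,PS_P)$ gives $H^0_{PS_P}(M_P)\cong \ext{S_P}e{M_P}{S_P}^\vee$, and since Matlis duality preserves annihilators, $(I_e)_P$ annihilates $H^0_{PS_P}(M_P)$, yielding the reverse inclusion. Equality then identifies $(M/(0:_MI_e))_P$ with $M_P/H^0_{PS_P}(M_P)$, which has positive depth, so $PS_P$ is not associated to this localization.

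For part~(2), applying the same local-duality computation twice identifies $(\ext Sc{\ext ScMS}S)_P$ with $H^0_{PS_P}(M_P)$ when $\cod P=c$, and with zero when $\cod P>c$ (since a $P$-torsion module has no higher local cohomology at $P$). The target of $\varphi$ is therefore supported precisely on the codim-$c$ associated primes of $M$, with stalks the $P$-torsion submodules of $M_P$. The inclusion $\text{hull}\,(0,M)\sub\ker\varphi$ is immediate: any element of the hull has annihilator of codimension greater than $c$ and so vanishes in $M_P$ at every codim-$c$ associated prime. For the reverse inclusion, given $m\not\in\text{hull}\,(0,M)$ one exhibits a codim-$c$ associated prime $P$ of $M$ at which $m$ has nonzero image in $M_P$, and the naturality of the biduality map together with the stalkwise identification forces $\varphi_P(m)\neq 0$.

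The principal obstacle is a bookkeeping point in part~(2): one must verify that, under the local-duality identifications, the abstract biduality map $\varphi$ localizes at each codim-$c$ associated prime $P$ to the natural map $M_P\to H^0_{PS_P}(M_P)$ that picks out the $P$-primary part of $M_P$. Once the naturality of Theorem~\ref{Hoch} in $M$ is carefully established, both inclusions in part~(2) reduce to the torsion-submodule computation already used in part~(1).
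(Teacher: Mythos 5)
The paper offers no proof of this statement at all—it is quoted directly from [EV, Theorem~1.1]—so there is nothing internal to compare against; I can only judge your argument on its own terms. Your overall strategy (localize at primes of the relevant codimension and apply local duality over $S_P$) is the right one and is close in spirit to the source, and most of part~(1) is correct: the codimension bound, the equivalence $P\in\as M\iff P\in V(I_e)$ for $\cod P=e$, and the case $I_e\sub P$ via $\sqrt{(I_e)_P}=PS_P$ and the fact that Matlis duality preserves annihilators are all fine. However, the claim that ``if $I_e\not\sub P$ the localization at $P$ of $M/(0:_MI_e)$ is zero'' is false: $I_e\not\sub P$ gives $(I_e)_P=S_P$, hence $(0:_MI_e)_P=(0:_{M_P}S_P)=0$, so the localization is $M_P$, which need not vanish (take $M=S$). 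The case is still salvageable—since $(M/(0:_MI_e))_P=M_P$ you only need $P\notin\as M$, which is exactly what your ``in particular'' equivalence gives for a codimension-$e$ prime outside $V(I_e)$—but the step as written is wrong.

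Part~(2) has two genuine gaps. First, the assertion that $(\ext Sc{\ext ScMS}S)_P=0$ for $\cod P>c$ is false: for $M=S/P_0$ with $\cod P_0=c$ the double Ext is again a module with support $V(P_0)$, and its localization at a larger prime of that support is nonzero; the parenthetical justification also fails because $(\ext ScMS)_P$ is not $PS_P$-torsion when $\cod P>c$. What your argument actually needs, in order to deduce $\varphi(m)=0$ from the vanishing of $\varphi(m)$ in all codimension-$c$ localizations, is that every \emph{associated} prime of $\ext Sc{\ext ScMS}S$ has codimension exactly $c$; this unmixedness of $\operatorname{Ext}^c$ of a module of codimension $\geq c$ is a real lemma (a depth estimate at higher-codimension primes) that you neither state nor prove. (Alternatively, the inclusion $\text{hull}\,(0,M)\sub\ker\varphi$ follows without any unmixedness from $\ext Sc{\text{hull}(0,M)}S=0$, which holds because $\cod\,\text{hull}(0,M)>c$, so that $\varphi$ factors through $M/\text{hull}(0,M)$ by naturality.) Second, your description of the localized map as ``the natural map $M_P\to H^0_{PS_P}(M_P)$ that picks out the $P$-primary part'' does not make sense: $H^0_{PS_P}(M_P)$ is a submodule, not a natural quotient, of $M_P$. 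The point you are missing is that since $c=\cod M$, every codimension-$c$ prime of $\text{Supp}\,M$ is minimal there, so $M_P$ has finite length, $H^0_{PS_P}(M_P)=M_P$, and $\varphi_P$ is the biduality map of a finite-length module over the $c$-dimensional regular local ring $S_P$, which is an isomorphism; its injectivity is what forces $\ker\varphi\sub\text{hull}\,(0,M)$. With these repairs your outline does become a proof, but as it stands both directions of part~(2) rest on incorrect or unestablished claims.
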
  

\noindent If $M$ is a \fg $S$-module, let $(\as M)_e=\{P\in \as M\vv \cod P=e\}$.

\begin{prop} Let $(S,\frak m)$ be a regular domain and $M$ a \fg $S$-module. Then
$$ \ext SjMS=0 \text{ implies }(\as M)_j=\emptyset\text{ hence} $$
$$ \max\{j\vv (\as M)_j\neq \emptyset\}\leq\max\{ j\vv \ext SjMS\neq 0\}.$$\end{prop}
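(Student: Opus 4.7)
The plan is to derive this directly from the Eisenbud--Huneke--Vasconcelos theorem stated just above, specifically from part (1), which characterizes associated primes of a given codimension in terms of the ideals $I_e = \an\ext{S}{e}{M}{S}$.

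First, I would unpack what $\ext SjMS = 0$ says about $I_j$. By definition, $I_j = \an\ext{S}{j}{M}{S} = \an(0) = S$, so $V(I_j) = \emptyset$. Then, applying part (1) of the EHV theorem, for any $P \in \text{Spec}\,S$ with $\cod P = j$, membership in $\as M$ is equivalent to membership in $V(I_j)$. Since $V(I_j)$ is empty, no prime $P$ of codimension $j$ can lie in $\as M$, i.e.\ $(\as M)_j = \emptyset$. This gives the first implication.

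For the second, contrapositive statement, suppose $j$ satisfies $j > \max\{i \mid \ext{S}{i}{M}{S} \neq 0\}$. Then $\ext SjMS = 0$, so by what was just shown $(\as M)_j = \emptyset$. Taking the maximum of the $j$'s for which $(\as M)_j$ is nonempty then yields the claimed inequality
$$\max\{j \mid (\as M)_j \neq \emptyset\} \leq \max\{j \mid \ext{S}{j}{M}{S} \neq 0\}.$$

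There is essentially no obstacle in this argument — the substance is entirely contained in the EHV theorem, and the proposition is a clean corollary of its part (1). The only minor point to be careful about is that we are invoking EHV for a regular domain $S$ (which is the hypothesis of the proposition), so all the hypotheses of the cited theorem are satisfied and no additional adjustment is needed.
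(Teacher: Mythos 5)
Your argument is correct, but it is not the route the paper takes. You read the proposition off as an immediate corollary of the ``in particular'' clause of EHV part (1): $\ext SjMS=0$ forces $I_j=\an(0)=S$, so $V(I_j)=\emptyset$ and no codimension-$j$ prime can be associated to $M$; the inequality of maxima then follows formally. That is a legitimate deduction, the hypotheses (regular domain, \fg module) match the cited theorem exactly, and it is arguably what the placement of the EHV theorem in the text invites. The paper, however, proves the contrapositive by a self-contained localization argument that does not use EHV at all: given $P\in\as M$ with $\cod P=j$, one has $S_P/P_P\sub M_P$, so the finite-length submodule $M'=H^0_{P_P}(M_P)$ is nonzero; since $\dpth M_P/M'>0$, Auslander--Buchsbaum over the $j$-dimensional regular local ring $S_P$ gives $\pd M_P/M'<j$, whence $\operatorname{Ext}^j_{S_P}(M_P,S_P)\cong\operatorname{Ext}^j_{S_P}(M',S_P)\neq 0$, and this is the localization of $\ext SjMS$. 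The trade-off is clear: your proof is shorter but leans on the full strength of the Eisenbud--Huneke--Vasconcelos theorem, while the paper's proof uses only standard local duality facts over a regular local ring and would survive even if one did not wish to invoke EHV; it also exhibits explicitly \emph{which} localization of the Ext module is nonzero, namely the one at the offending associated prime.
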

\begin{proof} Let $P\in\as M$ be of codimension $j$. Then $P_P$ is an $S_P$-associated prime of $M_P$ which is non-zero since $P\in V(\an M)=\text{Supp}\,M$. Hence $S_P/P_P\sub M_P$ and $H^0_P(M_P)\neq 0$ since $H^0_P(M_P)$ is the maximal submodule $M'$ of $M_P$ of finite type. $H^0_P(M_P/M')=0$ and therefore $\dpth M_P/M'>0$ and $\pd M_P/M'<j$ by Auslander-Buchsbaum.  Hence 
$$\operatorname{Ext}_{S_P}^j(M_P,S_P)\to \operatorname{Ext}_{S_P}^j(M',S_P)  $$
is an isomorphism. Since $\dm S_P=j$, $\operatorname{Ext}_{S_P}^j(M',S_P)\neq 0$ and therefore $$\operatorname{Ext}_{S_P}^j(M_P,S_P)=\ext SjMS_P\neq 0.$$
\end{proof}

\begin{thm} Let $e=\pd M$ and $I=I_e=\an\ext SeMS$.  Then $(\as M)_e\neq\emptyset$ if and only if $H^0_I(M)\neq 0$.\end{thm}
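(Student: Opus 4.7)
My plan is to reduce the statement to the equivalent assertion that $(\as M)_e \neq \emptyset$ if and only if some $P \in \as M$ contains $I$, and then combine the Eisenbud--Huneke--Vasconcelos theorem with the preceding proposition and Theorem ii.

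First I would record the elementary equivalence
$$H^0_I(M)\neq 0 \iff \text{there exists } P\in\as M \text{ with } I\sub P.$$
For one direction, any nonzero $m\in H^0_I(M)$ satisfies $I^n m=0$ for some $n$, so $I$ lies in every minimal prime over $\an_S m$, and any such minimal prime belongs to $\as M$. For the other, if $P=\an_S(m)\in\as M$ contains $I$, then $Im=0$, so $m\in H^0_I(M)$. With this in hand, the theorem becomes a statement purely about codimensions of associated primes of $M$ that contain $I$.

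For the ``only if'' direction, I would pick $P\in(\as M)_e$ and apply part (1) of the Eisenbud--Huneke--Vasconcelos theorem: because $\cod P=e$ and $P\in\as M$, that theorem forces $P\in V(I_e)$, i.e.\ $I\sub P$. The elementary equivalence then yields a nonzero element of $H^0_I(M)$.

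For the ``if'' direction, I would use the equivalence to produce $P\in\as M$ with $I\sub P$. On one hand, $\cod P\geq \cod I\geq e$ by the first clause of the EV theorem. On the other hand, the preceding proposition (bounding codimensions of associated primes by vanishing of $\ext S{\bullet}MS$) combined with Theorem ii of this section gives
$$\cod P \leq \max\{j\vv \ext SjMS\neq 0\} = \pd M = e.$$
Hence $\cod P=e$ and $P\in(\as M)_e$. The main obstacle is no deep step but rather bookkeeping: one has to be careful that the upper bound $\cod P\leq e$ truly uses the local/$*$-local hypothesis on $(S,\frak m)$, since it is only under that hypothesis that Theorem ii identifies $\pd M$ with the top nonvanishing index of $\ext S{\bullet}MS$.
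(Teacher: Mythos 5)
Your proof is correct and follows essentially the same route as the paper's: both arguments pivot on the equivalence ``$(\as M)_e\neq\emptyset$ iff some associated prime of $M$ contains $I$,'' established via part (1) of the Eisenbud--Huneke--Vasconcelos theorem together with the bounds $\cod I\geq e$ and $(\as M)_f=\emptyset$ for $f>e$ (the latter from the preceding proposition and Theorem ii). The only difference is cosmetic: you pass from that equivalence to $H^0_I(M)\neq 0$ using the elementary description of $H^0_I(M)$ as the $I$-power-torsion submodule, whereas the paper localizes at associated primes and invokes a depth criterion -- a simplification in the spirit of the paper's own follow-up remark.
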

\begin{proof} Since $e=\pd M$, $(\as M)_f=\emptyset$ for $f>e$. From the above result, $\cod I\geq e$ and $(\operatorname{Spec} S)_e\cap V(I)=(\as M)_e$. Therefore $(\as M)_e=\emptyset$ implies (is equivalent to) all associated primes $P$ of $M$ have $I\not\sub P$ which in turn implies $I_P=S_P$ and hence $(H^0_I(M))_P=H^0_{I_P}(M_P)=(0)$ but this implies $H^0_I(M)=(0)$ by [E], Cor. 3.5. Conversely if $(\as M)_e\neq\emptyset$ then there exists $P\in\as M$ with $I\sub P$. But $P_P\in\as_{S_P}M_P$ and so $\dpth(S_P,M_P)=0$.  But this is equivalent to $H^0_{P_P}M_P\neq (0)$ and since $H^0_{P_P}M_P\sub H^0_{I_P}M_P$ we have $(H^0_IM)_P\neq (0)$ and hence $H^0_IM\neq (0)$.\end{proof}

\begin{remark} An even easier proof is to note that by \cite{E} corollary 3.13 and \cite{EV}, $\as H^0_IM=(\as M)_e$ .\end{remark}

\begin{thm} Let $(S,\frak m)$ be a local, regular domain and $M$ a \fg $S$-module with $\pd M=e$.  Let $I=I_e=\an \ext SeMS$.  Then the following are equivalent.
\begin{enumerate}\item $\dpth M=\omega M$.
%\item $\dpth(I,M)=0$.
%\item $(0:_MI)=H^0_I(M)\neq 0$.
\item $H^0_{I}(M)\neq 0$.
%\item $H^0_{rI}(M)\neq 0$.
%\item $(0:_MrI)\neq 0$.
%\item $I\sub P\in\as M$.
%\item $\cod I=e$,
%\item $\ext Se{\ext SeMS}S\neq 0$.
%\item $ H^e_I(S)\neq 0$.
\item $ \operatorname{Hom}_S(\ext SeMS,M)\neq 0$.
\item $\operatorname{Tor}^S_e(M,M)\neq 0$.
%\item If $S$ is local or a \fg graded $k$-algebra with $k$ a field then these are equivalent to $\ext SeM{M^\vee}\neq 0$ where $M^\vee$ is the Matlis dual of $M$, that is $M^\vee=\operatorname{Hom}_S(M,H^d_{\frak m}(S))$ and if $S$ is a \fg $k$-algebra this is also equal to $*\operatorname{Hom}_k(M,k)$ with grading $(M^\vee)_s=\operatorname{Hom}_k(M_{-s},k)$.
\end{enumerate}\end{thm}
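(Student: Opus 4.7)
The plan is to establish the four-way equivalence cyclically as $(1) \iff (2) \iff (3) \iff (4)$, relying on the preceding proposition and theorem (which identify $(\as M)_e \ne \emptyset$ with $H^0_I(M) \ne 0$), the Eisenbud--Huneke--Vasconcelos statement on $I_e$, and a direct comparison between $\operatorname{Hom}_S(N, M)$ and $\operatorname{Tor}^S_e(M, M)$ coming from the minimal free resolution of $M$.

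For $(1) \iff (2)$, Auslander--Buchsbaum gives $\dpth M = d - e$, while regularity of $S$ gives $\dim P = d - \cod P$ for every $P \in \operatorname{Spec} S$, so $\omega M = d - \max\{\cod P : P \in \as M\}$. The preceding proposition bounds this max by $\pd M = e$, hence $\dpth M = \omega M$ is equivalent to $(\as M)_e \ne \emptyset$, which the preceding theorem in turn identifies with $H^0_I(M) \ne 0$.

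For $(3) \iff (4)$, I would dualize the end of a minimal free resolution $\dots \to P_{e-1} \to P_e \to 0$ of $M$ to obtain the presentation $P_{e-1}^{*} \to P_e^{*} \to N \to 0$, where $N = \ext SeMS$. Applying $\operatorname{Hom}_S(-, M)$ and using $\operatorname{Hom}_S(P_i^{*}, M) = P_i \otimes_S M$ for the f.g.\ free modules $P_i$ yields
$$0 \to \operatorname{Hom}_S(N, M) \to P_e \otimes_S M \to P_{e-1} \otimes_S M,$$
and its left-hand term is literally $\operatorname{Tor}^S_e(M, M) = \ker(P_e \otimes_S M \to P_{e-1} \otimes_S M)$.

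The substantive step is $(2) \iff (3)$. The implication $(3) \Rightarrow (2)$ is immediate: $IN = 0$ forces the image of any $\varphi : N \to M$ into $(0:_M I) \sub H^0_I(M)$. For $(2) \Rightarrow (3)$, the preceding theorem furnishes $P \in \as M$ with $\cod P = e$ and $I \sub P$; since EHV gives $\cod I \ge e$, this $P$ is a minimal prime of $I = \an N$, whence $P \in \as N$. This yields embeddings $S/P \hookrightarrow M$ and $S/P \hookrightarrow N$. It then suffices to produce a nonzero map $N \to S/P$ and post-compose with $S/P \hookrightarrow M$; localizing at $P$, $\operatorname{Hom}_S(N, S/P)_P = \operatorname{Hom}_{S_P}(N_P, \kappa(P))$, which is nonzero by Nakayama applied to the nonzero f.g.\ module $N_P$. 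The principal obstacle I anticipate is precisely this EHV-based upgrade from $\as M$ to $\as N$; once $P \in \as N$ is in hand, every remaining piece is formal.
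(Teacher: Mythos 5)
Your proof is correct, and two of its three steps coincide with the paper's argument essentially verbatim: the $(1)\iff(2)$ step (Auslander--Buchsbaum plus the two preceding results identifying $(\as M)_e\neq\emptyset$ with $H^0_I(M)\neq 0$) and the $(3)\iff(4)$ step (dualizing the tail of a minimal free resolution and using $\operatorname{Hom}_S(P^*,M)\cong P\otimes_S M$). Where you genuinely diverge is $(2)\iff(3)$. The paper gets this in one line from two citations: Eisenbud's depth-sensitivity of Ext, $\dpth(I,M)=\min\{r\vv \ext Sr{\overline M}M\neq 0\}$ for $\overline M=\ext SeMS$ and $I=\an\overline M$ (using $\an M\sub\an\overline M$), together with $\dpth(I,M)=\min\{t\vv H^t_IM\neq 0\}$; comparing the two minima at $0$ gives the equivalence. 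You instead argue by hand: $(3)\Rightarrow(2)$ because $IN=0$ pushes the image of any nonzero map $N\to M$ into $(0:_MI)\sub H^0_I(M)$, and $(2)\Rightarrow(3)$ by extracting $P\in(\as M)_e$ with $I\sub P$ from the preceding theorem and EHV, producing a nonzero map $N\to S/P$ by localizing at $P$ and applying Nakayama to $N_P\neq 0$, then composing with $S/P\hookrightarrow M$. Both are valid; the paper's route is shorter given the references, while yours is self-contained and exhibits concretely which associated prime is responsible for the nonvanishing of $\operatorname{Hom}_S(\ext SeMS,M)$. One small remark: your detour through $P\in\as N$ (via minimality of $P$ over $I$) is never used --- all your localization argument needs is $N_P\neq 0$, which already follows from $I=\an N\sub P$.
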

%\begin{remark} In the graded case $M^{\vee}=*\operatorname{Hom}_k(M,k)=\oplus_s\operatorname{Hom}_k(M_{-s},k)$ and $H^d_{\frak m}(S)=*\operatorname{Hom}_S(S[-d],k)=\oplus_s\operatorname{Hom}_k(S_{-s-d},k)$.\end{remark}
\begin{proof} Since $\pd M=e$, $\ext SfMS=(0)$ if $f>e$, hence $(\as M)_f=\emptyset$ for $f>e$.
\par $(1)\iff (2)$. If $\dm S=s$ then $\dpth M=s-\pd M=s-e$ by Auslander-Buchsbaum.  On the other hand $\omega M=s-\max\{j\vv (\as M)j\neq\emptyset\}$.  Therefore $\dpth M=\omega M$ if and only if $(\as M)_e\neq\emptyset$ if and only if $H^0_IM\neq (0)$. 
%\par $(2)\iff(3)$ is standard [Hochster, Theorem 6.9].
%\par $(3)\iff(4)$ $H^i_JM=H^i_{rJ}M$ for any ideal.
%\par $(4)\iff (5)$ Part c) of the above corollary.
%\par $(2)\iff (6)$ is a consequence of Prime Avoidence.  Therefore the first seven are equivalent.
%\par $(6)\iff (7)$ is the fact from [E-H-V] stated above that $(\as M)_e=\text{Spec}_e\cap V(I_e)$ 
%\par $(7)\iff (8)$ follows from the fact that $\cod \ext SeMS\geq e$ from [E-H-V] and $$\min\{j\vv \ext Sj{\ext SeMS}S)\neq 0\}=\cod \ext SeMS\geq e$$ .
%\par $(7)\iff(9)$ $S$ regular so Cohen-Macaulay therefore $$\cod I=\dpth (I,S)=\min\{ j\vv H^j_I(S)\neq 0\}.$$  Since $\cod I\geq e$, $H^j_I(S)=(0)$ for $j<e$.  Hence $\cod  I=e\iff H^e_I(S)\neq 0$.
\par $(2)\iff(3)$ Let $\overline M=\ext SeMS$. From [E], Proposition 18.4, for all $N$ with $\an N+\an\overline M\neq S$ 
$$\dpth(I,N)=\min\{r\vv \ext Sr{\overline M}N\neq 0\}.$$
But $\an M\sub \an \overline M$ and so
$$\dpth(I,M)=\min\{r\vv \ext Sr{\overline M}M\neq 0\}.$$
But from \cite{H}, 6.9, $\dpth(I,M)=\min\{t\vv H^t_{I}M\neq 0\}$.  Therefore 
$$ H^0_{I}M\neq 0\iff \dpth(I,M)=0\iff \operatorname{Hom}(\ext SeMS,M)\neq 0.$$
%\par $(10)\iff(11)$ 
\par $(3)\iff(4)$.
If $F_*\to M\to 0$ is a minimal free $S$-resolution of $M$ then 
$$   \operatorname{Hom}_S(F_{e-1},S)\to  \operatorname{Hom}_S(F_e,S)\to \ext SeMS\to 0  $$
is exact and hence
$$0\to  \operatorname{Hom}_S(\ext SeMS,M)\to  \operatorname{Hom}_S( \operatorname{Hom}_S(F_e,S),M))\to \operatorname{Hom}( \operatorname{Hom}_S(F_{e-1},S),M))$$
is exact.  But for any \fg projective module $P$, there is a natural isomorphism $P\otimes M\to \operatorname{Hom}_S( \operatorname{Hom}_S(P,S),M)$ given by $x\otimes m\mapsto g$ where $g(f)=f(x)m$.  Hence there exists an exact sequence
$$  0\to  \operatorname{Hom}_S(\ext SeMS,M)\to F_e\otimes M\to F_{e-1}\otimes M  $$ 
and therefore $ \operatorname{Hom}_S(\ext SeMS,M)\simeq \operatorname{Tor}_e^S(M,M)$.  Hence $H^0_IM\neq 0$ if and only if $\operatorname{Tor}_e^S(M,M)\neq 0$.
\end{proof}

\begin{remark} In the graded case since $*\operatorname{Hom}_S(A,B)[a]=*\operatorname{Hom}_S(A[-a],B)=*\operatorname{Hom}_S(A,B[a])$ for any $a\in\Z$, we have for any $a,b\in\Z$, $*\ext SjAB\neq 0$ if and only if $*\ext Sj{A[a]}{B[b]}\neq 0$ . \end{remark}

\begin{thm} Let $(S,\frak m)=k[x_1,\dots,x_n]$ be a graded polynomial ring over the field $k$ and $M$ a \fg graded $S$-module of depth $d$. Then $$\dpth_SM=\omega_SM\text{ if and only if }*\operatorname{Hom}_S(M^{\vee},H^d_{\frak m}M)\neq(0)$$
where $M^{\vee}$ is the Matlis dual of $M$, 
$$M^{\vee}=*\operatorname{Hom}_k(M,k)\simeq *\operatorname{Hom}_S(M,E_S(k)). $$
\end{thm}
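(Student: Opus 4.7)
My plan is to combine three ingredients: the preceding theorem's $\operatorname{Ext}$-characterization of $\dpth M=\omega M$; graded local duality to convert $\operatorname{Ext}$ into local cohomology; and the Matlis-dual tensor-hom adjunction to move from $*\operatorname{Hom}_S((H^d_{\frak m}M)^\vee,M)$ to $*\operatorname{Hom}_S(M^\vee,H^d_{\frak m}M)$.

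Write $n=\dm S$ and $e=\pd_S M$. Auslander--Buchsbaum gives $e=n-d$. By the preceding theorem (condition (3)) together with the earlier remark that $*\operatorname{Hom}_S$ and $\operatorname{Hom}_S$ coincide on a \fg source,
\[ \dpth M=\omega M \iff *\operatorname{Hom}_S(\ext S{n-d}MS,M)\neq 0. \]
Graded local duality (Theorem \ref{Hoch} and its remark) identifies $(H^d_{\frak m}M)^\vee\simeq \ext S{n-d}M{S[a]}$ with $a=-\sum\deg(x_i)$, and the remark immediately preceding the statement records that non-vanishing of $*\operatorname{Hom}_S$ is insensitive to internal shifts of either argument. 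Hence the condition above is equivalent to $*\operatorname{Hom}_S((H^d_{\frak m}M)^\vee,M)\neq 0$.

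To close the chain, I invoke the natural isomorphism $*\operatorname{Hom}_S(A,B^\vee)\simeq (A\otimes_S B)^\vee\simeq *\operatorname{Hom}_S(B,A^\vee)$, valid for any graded $S$-modules $A,B$. Apply it with $A=(H^d_{\frak m}M)^\vee$ and $B=M^\vee$: since $M$ is \fg with finite-dimensional homogeneous components we have $M^{\vee\vee}\simeq M$, and since $H^d_{\frak m}M$ is Artinian with finite-dimensional components we have $(H^d_{\frak m}M)^{\vee\vee}\simeq H^d_{\frak m}M$. The adjunction then specializes to
\[ *\operatorname{Hom}_S((H^d_{\frak m}M)^\vee,M)\simeq *\operatorname{Hom}_S(M^\vee,H^d_{\frak m}M), \]
which finishes the equivalence. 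The only genuine bookkeeping point is the graded shift $[a]$ produced by local duality, but the shift-invariance of $*\operatorname{Hom}_S$-nonvanishing noted in the preceding remark makes that step purely formal, so I do not foresee any further obstacle.
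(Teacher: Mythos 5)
Your proof is correct and follows essentially the same route as the paper: both reduce to the $\operatorname{Hom}_S(\ext SeMS,M)\neq 0$ criterion of the preceding theorem, invoke graded local duality with the shift handled by the shift-invariance remark, and then use Matlis duality to swap the two arguments of $*\operatorname{Hom}_S$. The only cosmetic difference is that you justify the swap via the tensor-hom adjunction and double-dual reflexivity, whereas the paper quotes the anti-equivalence between finitely generated and Artinian graded modules and applies local duality last; the content is identical.
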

\begin{proof} By Auslander-Buchsbaum $\pd M=e=n-d$ and we know $\dpth_SM=\omega_SM$ if and only if $\operatorname{Hom}_S(\ext SeMS,M)\neq 0$.  If $\sigma=\sum\text{degree } x_i$ then the above remarks say $\dpth_SM=\omega_SM$ if and only if $\operatorname{Hom}_S(\ext SeM{S[-\sigma]},M)\neq 0$.  Since $\ext SeM{S[-\sigma]}$ is \fg, $$\operatorname{Hom}_S(\ext SeM{S[-\sigma]},M)=*\operatorname{Hom}_S(\ext SeM{S[-\sigma]},M).$$  Since Matlis duality is an anti-equivalence of the full subcategories of graded $S$-modules consisting of \fg graded modules and Artinian modules $\dpth_SM=\omega_SM$ if and only if $*\operatorname{Hom}_S(M^\vee,(\ext SeM{S[-\sigma]})^\vee)\neq 0$.
By local duality, \ref{Hoch}, we have $(\ext SeM{S[-\sigma]},M)^\vee\simeq H^d_{\frak m}M$.
\end{proof}

\section{Finitely generated graded $k$-algebras and $H^*(G,k)$}

\begin{thm} Let $(A,\frak m)$ be a \fg graded commutative $k$-algebra and $M$ a \fg graded module of depth $d$. 
Let $M^*$ be the $k$-dual of $M$ with $A$-action given by $(af)m)=f(am)$.  Then $\dpth_AM=\omega_AM$ if and only if $*\operatorname{Hom}_A(M^*,H^d_{\frak m}M)\neq (0)$. \end{thm}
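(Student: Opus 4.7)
The plan is to reduce to the polynomial-ring case from the previous theorem, via graded Noether normalization. Choose a graded polynomial subring $S = k[x_1,\dots,x_n] \hookrightarrow A$ with $A$ \fg as an $S$-module --- always possible for a \fg graded commutative $k$-algebra. Proposition~\ref{red} parts (1) and (3) give $\dpth_A M = \dpth_S M = d$ and $\omega_A M = \omega_S M$, so $\dpth_A M = \omega_A M$ is equivalent to $\dpth_S M = \omega_S M$; by the previous theorem the latter is equivalent to $*\operatorname{Hom}_S(M^\vee, H^d_{\frak m_S} M) \neq 0$, where $M^\vee$ is the Matlis dual of $M$ over $S$.

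Two identifications are immediate. The $S$-Matlis dual $M^\vee = *\operatorname{Hom}_k(M,k)$ agrees with the $k$-dual $M^*$ of the theorem, since both denote the same graded $k$-vector space with the same $A$-action (hence the same $S$-action). The local cohomology modules coincide as graded $A$-modules, $H^d_{\frak m_S} M = H^d_{\frak m_A} M$, because $A$ integral over $S$ forces $\sqrt{\frak m_S A} = \frak m_A$ and local cohomology depends only on the radical of the defining ideal. It therefore remains to prove
$$ *\operatorname{Hom}_S(M^*, H^d_{\frak m_A} M) \neq 0 \iff *\operatorname{Hom}_A(M^*, H^d_{\frak m_A} M) \neq 0. $$

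One direction is immediate, since every $A$-linear map is $S$-linear. For the converse I plan to invoke graded Matlis duality over $A$ --- the anti-equivalence between \fg and Artinian graded $A$-modules --- to rewrite both groups as $*\operatorname{Hom}_S(L,M)$ and $*\operatorname{Hom}_A(L,M)$ respectively, where $L = (H^d_{\frak m_A} M)^\vee$ is \fg over $A$ and, by local duality over the regular ring $S$, is isomorphic to $\operatorname{Ext}_S^{n-d}(M, S[-\sigma])$ carrying its $A$-action inherited from $M$. This $S$-Hom versus $A$-Hom comparison is the main obstacle: for a generic pair of \fg $A$-modules the implication is false, so I must use that the pair $(L,M)$ has special structure. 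By the Eisenbud--Huneke--Vasconcelos theorem the codimension-$(n-d)$ associated $S$-primes of $L$ and of $M$ coincide, and by Proposition~\ref{red}(1) each such $S$-prime lifts to $A$-associated primes of $L$ and of $M$. I plan to exploit this shared associated-prime structure, together with the Matlis-duality identifications, to upgrade any nonzero $S$-linear map $L \to M$ to a nonzero $A$-linear one, completing the equivalence.
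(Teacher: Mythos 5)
There is a genuine gap, and it sits exactly where you flag ``the main obstacle.'' By choosing a Noether normalization $S\hookrightarrow A$ you are forced to prove that a nonzero $S$-linear map $M^*\to H^d_{\frak m}M$ yields a nonzero $A$-linear one, and for a subring this is simply not automatic: $*\operatorname{Hom}_A$ is a proper subgroup of $*\operatorname{Hom}_S$ in general, and nothing you cite forces it to be nonzero when the larger group is. Your proposed remedy --- Matlis duality to rewrite both sides as $*\operatorname{Hom}_S(L,M)$ versus $*\operatorname{Hom}_A(L,M)$ with $L=(H^d_{\frak m}M)^\vee$, followed by an appeal to the shared associated primes of $L$ and $M$ coming from Eisenbud--Huneke--Vasconcelos --- is only a plan, and the key step (``upgrade any nonzero $S$-linear map to a nonzero $A$-linear one'') is never carried out. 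I do not see how the associated-prime data alone could carry it out: common associated primes give common nonzero Hom into a residue field, not a nonzero $A$-linear map between the modules themselves. Note also that the only evident route to the equivalence of the two Hom conditions is via $\dpth_A M=\omega_A M$, i.e.\ via the very theorem you are proving, so the argument risks circularity.

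The paper avoids the issue entirely by taking $\pi\colon S=k[x_1,\dots,x_n]\twoheadrightarrow A$ to be a \emph{surjection} rather than an inclusion. Then Proposition~\ref{red}(1) and (2) still give $\dpth_SM=\dpth_AM$ and $\omega_SM=\omega_AM$, the change-of-rings isomorphism $H^i_{\frak m_S}M\cong H^i_{\frak m}M$ holds because $\pi(\frak m_S)=\frak m$, and --- the decisive point --- $\operatorname{Hom}_S(M^*,H^d_{\frak m}M)=\operatorname{Hom}_A(M^*,H^d_{\frak m}M)$ \emph{on the nose}, since for a surjective ring map every $S$-linear map between $A$-modules is $A$-linear. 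If you replace your normalization by a presentation, the rest of your argument (the identification $M^\vee\simeq M^*$ and the local cohomology comparison) goes through and the problematic step evaporates.
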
 
\begin{proof} Let $\pi:(S,\frak m_S)=(k[x_1,\dots, x_n],(x_1,\dots,x_n))\to (A,\frak m)$ be an epimorphism.  By \ref{red} $\dpth_AM=\omega_AM$ if and only if $\dpth_SM=\omega_SM$ and this occurs if and only if $\operatorname{Hom}_S(M^{\vee},H^d_{\frak m_S}M)\neq(0)$.  Now by change of rings for local cohomolgy there is a canonical isomorphism $H^i_{\frak m_S}M\simeq H^i_{\frak m}M$ since $\pi(\frak m_S)=\frak m$.  If the $A$-module $H^i_{\frak m}M$ is given an $S$-module structure by means of $\pi$ then this isomorphism is an isomorphism of $S$-modules.  Since $M^\vee\simeq M^*$ and the $A$-module and $S$-module structures on $M^*$ correspond under $\pi$ we can conclude that $\dpth_AM=\omega_AM$ if and only if $\operatorname{Hom}_S(M^*,H^d_{\frak m}M)\neq (0)$ where both $M^*$ and $H^d_\frak{m}M$ are $A$-modules which are $S$-modules via $\pi$. Since $\pi$ is an epimorphism $\operatorname{Hom}_S(M^*,H^d_{\frak m}M)=\operatorname{Hom}_A(M^*,H^d_{\frak m}M)$.  \end{proof}

\begin{cor} Suppose $A=H^*(G,k)$ then $\dpth A=\omega A$ if and only if there exists a non-zero graded $H^*(G,k)$-map,
$\lambda:H_*(G,k)\to H^{\dpth A}_{\frak m}(H^*(G,k))$ where $H_*(G,k)$ is an $H^*(G,k)$-module via cap product.
\end{cor}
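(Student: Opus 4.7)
The plan is to apply the preceding theorem to the case $M=A=H^*(G,k)$ viewed as a module over itself, with $d=\dpth A$. That theorem gives at once that $\dpth A=\omega A$ if and only if $*\operatorname{Hom}_A(A^*,H^d_{\frak m}A)\neq (0)$, where $A^*$ carries the contragredient action $(af)(m)=f(am)$. The only thing to verify is therefore an identification of $A^*$, as a graded $A$-module, with $H_*(G,k)$ equipped with the cap-product action.

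For this identification I would use the Kronecker (evaluation) pairing $\langle\cdot,\cdot\rangle:H_n(G,k)\otimes_k H^n(G,k)\to k$. Since $G$ is finite and $k$ is a field, each $H^n(G,k)$ is finite-dimensional over $k$, so this pairing is perfect in every degree and yields a natural isomorphism of graded $k$-vector spaces
$$\Phi:H_*(G,k)\longrightarrow A^*,\qquad z\longmapsto f_z,\qquad f_z(\alpha)=\langle z,\alpha\rangle.$$
The content of the proof is checking that $\Phi$ intertwines the two $A$-module structures, which is the standard cap--cup adjunction: for $\alpha,\beta\in H^*(G,k)$ and $z\in H_*(G,k)$,
$$f_{\alpha\cap z}(\beta)=\langle\alpha\cap z,\beta\rangle=\langle z,\alpha\cup\beta\rangle=f_z(\alpha\beta)=(\alpha\cdot f_z)(\beta).$$
Thus $\Phi$ is an isomorphism of graded $A$-modules, up to the usual convention on grading (one regards $H_*(G,k)$ as negatively graded, matching $(A^*)_s=\operatorname{Hom}_k(A_{-s},k)$); this affects only internal degree shifts and not the non-vanishing of $*\operatorname{Hom}$.

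Combining the two observations, the condition $*\operatorname{Hom}_A(A^*,H^d_{\frak m}A)\neq 0$ is precisely the existence of a non-zero graded $H^*(G,k)$-map $\lambda:H_*(G,k)\to H^{\dpth A}_{\frak m}H^*(G,k)$, which is the content of the corollary. There is no serious obstacle beyond bookkeeping; the single piece of non-formal input is the cap--cup adjunction identity used above, which is a standard fact for the cohomology of a finite group.
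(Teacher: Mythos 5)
Your proposal is correct and follows the same route as the paper: apply the preceding theorem with $M=A$ and identify $A^*$ with $H_*(G,k)$ so that the contragredient action becomes the cap product. The paper simply cites this identification as well known, whereas you verify it explicitly via the cap--cup adjunction; the substance is identical.
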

\begin{proof} $(A^*)_s=\operatorname{Hom}_k(A^{-s},k)\simeq H_{-s}(G,k)$ and it is well known that the action of $A$ on $A^*$ is the cap product.\end{proof}

\begin{thm} Let $G$ be a finite group and $k$ a field whose characteristic divides the order of $G$. If $\dim H^*(G,k)-\dpth H^*(G,k)=1$ then $\dpth H^*(G,k)=\omega H^*(G,k)$.
\end{thm}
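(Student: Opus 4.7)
The plan is to apply the preceding corollary: with $A=H^*(G,k)$, $\frak m=A^+$, and $d=\dpth A$, it suffices to exhibit a nonzero graded $A$-linear map $\lambda:H_*(G,k)\to H^d_{\frak m}(A)$.

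The natural source for such a $\lambda$ is the Greenlees local-cohomology spectral sequence for a finite group,
$$E_2^{s,t}=\bigl(H^s_{\frak m}A\bigr)^t\;\Longrightarrow\;H_{-s-t}(G,k),$$
which is $A$-linear and strongly convergent (with the $A$-action on $H_*(G,k)$ given by cap product). The hypothesis $\dim A-\dpth A=1$, together with the vanishing of $H^s_{\frak m}A$ outside $\dpth A\leq s\leq\dim A$, restricts $E_2$ to the two columns $s=d$ and $s=d+1$. Any differential $d_r$ with $r\geq 2$ shifts the column index by at least $2$, so every such differential lands outside the support and hence vanishes; therefore $E_2=E_\infty$. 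Strong convergence of a two-column spectral sequence then produces, in each total degree, a graded short exact sequence of $A$-modules
$$0\longrightarrow H^{d+1}_{\frak m}A\longrightarrow H_*(G,k)\longrightarrow H^d_{\frak m}A\longrightarrow 0$$
(after the appropriate internal regrading). The right-hand projection is a graded $A$-linear map into $H^d_{\frak m}A$ and is nonzero, because $H^d_{\frak m}A\neq 0$ by the very definition of $d$ as $\dpth A$. Taking $\lambda$ to be this projection and applying the corollary yields $\dpth A=\omega A$.

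The chief obstacle is invoking the spectral sequence in precisely this form with the stated $A$-linearity; once that ingredient is in hand the two-column collapse is essentially formal. If one wished to stay inside the framework built in Section~3, the same argument can be recast: via a surjection $S=k[x_1,\dots,x_n]\twoheadrightarrow A$ the criterion becomes nonvanishing of $*\operatorname{Hom}_S(A^\vee,H^d_{\frak m}A)$ with $A^\vee\simeq H_*(G,k)$, and the almost-Cohen--Macaulay hypothesis confines $\ext{S}{*}{A}{S}$ to the two adjacent codimensions $n-d-1$ and $n-d$; this again forces collapse in an analogous Grothendieck spectral sequence and produces the required nonzero edge map.
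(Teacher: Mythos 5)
Your proposal is correct and follows essentially the same route as the paper: both invoke the Greenlees local cohomology spectral sequence of $H^*(G,k)$-modules, use Grothendieck's vanishing and non-vanishing to confine $E_2$ to the columns $\dpth$ and $\dpth+1$ so that $E_2=E_\infty$, and take $\lambda$ to be the resulting nonzero edge homomorphism $H_*(G,k)\to H^{\dpth}_{\frak m}(H^*(G,k))$, feeding it into the preceding corollary. Your packaging of the collapse as a two-term filtration short exact sequence is just a slightly more explicit rendering of the paper's statement that the edge map has image $E_\infty^{\dpth,*}=E_2^{\dpth,*}\neq 0$.
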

\begin{proof} In \cite{Gr} and again from a more algebraic aspect in \cite{B2} and \cite{B} a convergent spectral sequence $\{E_r^{p,q}\vv p\geq 0,\,r\geq 2\}$ of $H^*(G,k)$-modules is developed with $E_2^{p,q}=H^{p,q}_{\frak m}H^*(G,k)$ where $H^{p,q}_{\frak m}H^*(G,k))$ is the degree $q$ part of $H^p_{\frak m}(H^*(G,k))$ and converging to $H_{q-p}(G,k)$.  That is there is an decreasing filtration $F^pH_*(G,k)$ with $E_{\infty}^{p,-q}\simeq F^p(H_{q-p}(G,k)/F^{p+1}H_{q-p}(G,k)$. Grothendieck's Vanishing Theorem says $E_2^{p,q}=(0)$ if $p<\dpth H^*(G,k)$ or $p> \dim H^*(G,k)$ and that both $E_2^{\dpth, *}$ and $E_2^{\dim,*}$ are non-zero.  There is a graded $H^*(G,k)$ edge homomophism $\nu: H_*(G,k)\to E_2^{\dpth,*}=H_{\frak m}^{\dpth,*}(H^*(G,k))$ whose image is $E_{\infty}^{\dpth,*}$.  Hence if $E_{\infty}^{\dpth,*}\neq (0)$ we have a non-zero graded $H^*(G,k)$-homomorphism $$\nu: H_*(G,k)\to H^*_{\frak m}H^*(G,k)$$ and hence $\dpth H_*(G,k)=\omega H_*(G,k)$.  If $\dim H_*(G,k)-\dpth H_*(G,k)=1$ this spectral sequence has only two non-zero columns and so $E_2=E_{\infty}$. Grothendieck's Theorem gives the result. \end{proof}

\end{document}